\documentclass[11pt, twoside, notitlepage]{report}

\usepackage[paper=a4paper,top=2cm,left=2cm,right=2cm,foot=2cm,bottom=3cm]{geometry}

\usepackage[english]{babel}
\addto{\captionsenglish}{%
  
}
\usepackage{amssymb}
\usepackage{amsmath}
\usepackage{alltt}
\usepackage{enumerate}
\usepackage{tabularx}
\usepackage{slashbox}
\pagestyle{headings}
\usepackage{theorem}
\usepackage{algorithmic, algorithm}
\usepackage{cases}

\usepackage{latexsym}

\newenvironment{proof}{\par\noindent\textit{Proof.}}{$\Box$\par\bigskip\par}
\newenvironment{proofTrees}{\par\noindent\textit{Proof of Theorem~\ref{thm:TreesMain}.}}{$\Box$\par\bigskip\par}
\newenvironment{proofHaircomb}{\par\noindent\textit{Proof of Theorem~\ref{thm:HairComb}.}}{$\Box$\par\bigskip\par}

\newtheorem{theorem}{Theorem}
\newtheorem{observation}{Observation}

\newtheorem{definition}{Definiton}
\newtheorem{lemma}{Lemma}

\usepackage[usenames,dvipsnames]{pstricks}
\usepackage{epsfig}
\usepackage{pst-grad} 
\usepackage{pst-plot} 

\title{\textsc{Homometric sets in trees}\thanks{The first author gratefully acknowledges support from the Swiss National Science Foundation, Grant No. 200021-125287/1.}}
\author{Radoslav Fulek\thanks{
     	Ecole Polytechnique F\'ed\'erale de Lausanne, Switzerland.
		{\tt radoslav.fulek@epfl.ch}}
        \and
		Slobodan Mitrovi\'c\thanks{
     	Ecole Polytechnique F\'ed\'erale de Lausanne, Switzerland.
		{\tt boba5555@gmail.com}     	
     	}
}

\date{}
\begin{document}

\maketitle

\begin{abstract}
 Let $G = (V,E)$ denote a simple graph with the vertex set $V$ and the edge set $E$. The profile of a
vertex set $V'\subseteq V$ denotes the multiset of pairwise distances between the vertices of $V'$. Two disjoint 
subsets of $V$ are \emph{homometric}, if their profiles are the same.  If $G$ is a tree on $n$ vertices we prove that
its vertex sets contains a pair of disjoint homometric subsets of size at least $\sqrt{n/2} - 1$. Previously it was known 
that such a pair of size at least roughly $n^{1/3}$ exists.
We get a better result in case of haircomb trees, in which we are able to find a pair of disjoint homometric sets
of size at least $cn^{2/3}$ for a constant $c > 0$.
\end{abstract}

\section{Introduction}\label{sec:introduction}
    A \emph{graph} is a system of two elements sets called \emph{edges} over a finite set of \emph{vertices}.
    We refer the reader to the book of Diestel~\cite{Diestel} for  definitions of  standard graph notions (such as
    the degree of a vertex, the distance between two vertices etc.) used in the sequel.
	The multiset of pairwise distances of elements of the vertex set in a graph is called the \emph{profile}.
	Two disjoint vertex subsets of a graph are called \emph{homometric sets} if theirs profiles are the same.
	Certain properties of homometric structures that appear in radio communications,
	X-ray crystallography, and self-orthogonal codes are known for at least 80 years~\cite{Wallace, BloomGolomb, RobinsonBernstein}.
	That motivated mathematicians to study homometric sets over various structures, most notably over the set of integers. Here, the graph is an infinite path.

	The following result was claimed by Piccard~\cite{Piccard} in 1939: If two sets of integers, $A$ and $B$, have the same
	multisets of distinct distances, in which each distance occurs at most once, then they are the same up to congruence.
	However, in 1977 Bloom~\cite{Bloom} found an error in the proof and also constructed a counter-example to
	the claim, which is the following: $A = \{0, 1, 4, 10, 12, 17\}$, $B = \{0, 1, 8, 11, 13, 17\}$.
  In 2007 Bekir and Golomb~\cite{BekirG07} showed that no additional counterexamples are possible.

	Homometric sets appear also in  music. A chromatic scale can be seen as the cycle $C_{12}$.
	\emph{The Hexachordal} theorem states that if the nodes of the cycle are divided into two disjoint
	sets $A$ and $B$, each containing exactly six nodes, then the profile of $A$ is equal to the profile of $B$.

   By the \emph{size of homometric sets} we understand the size of a set in the homometric pair.
	In 2010, Albertson, Pach and Young~\cite{AlbertsonPachYoung} initiated the study of homometric sets in graphs.
	They proved that every graph on $n$ vertices contains homometric sets of  size
	at least $\frac{c \log{n}}{\log{\log{n}}}$. On the other hand, they only constructed a class of graphs where the size of homometric sets
    cannot exceed $n/4$, thus, leaving wide open the question of the right order magnitude of the combinatorial bound on the maximal size of homometric sets in the graphs.

     In 2011, Axenovich and \"Ozkahya~\cite{MariaLale} gave a better lower bound on the maximal size of
	 homometric sets in  trees.
	They showed that every tree on $n$ vertices contains  homometric sets
	of size at least $n^{1/3}$. In the same paper they showed that a haircomb tree on $n$ vertices contains
	homometric sets of size at least $\sqrt{n}/2$.
	\\\\
	A \emph{haircomb} $H$ is a tree consisting of a collection of vertex disjoint paths $\{P_1, P_2, \ldots, P_m\}$ such that
	for each $1\leq i< m$ the first vertex of $P_i$ and the first vertex of $P_{i + 1}$ are connected by an edge  as
	illustrated in Figure \ref{fig:haircomb}. The paths $P_1, \ldots, P_m$ are called the legs of $H$. If $P_{i, 1}$
	denotes the first vertex of $P_i$, then the path $S = P_{1, 1} P_{2, 1}, \ldots ,P_{m - 1,1} P_{m, 1}$ is called
	the \emph{spine} of $H$. 
	
			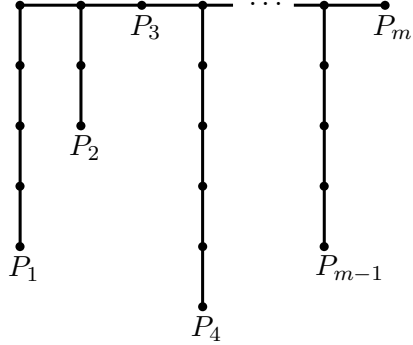
\begin{figure}[h]
			\begin{center}
\scalebox{1} 
{
\begin{pspicture}(0,-2.368125)(5.8428125,2.368125)
\psdots[dotsize=0.12](0.3809375,2.1496875)
\psdots[dotsize=0.12](1.1809375,2.1496875)
\psdots[dotsize=0.12](1.9809375,2.1496875)
\psdots[dotsize=0.12](2.7809374,2.1496875)
\psdots[dotsize=0.12](4.3809376,2.1496875)
\psdots[dotsize=0.12](5.1809373,2.1496875)
\psdots[dotsize=0.12](0.3809375,1.3496875)
\psdots[dotsize=0.12](0.3809375,0.5496875)
\psdots[dotsize=0.12](0.3809375,-0.2503125)
\psdots[dotsize=0.12](1.1809375,1.3496875)
\psdots[dotsize=0.12](1.1809375,0.5496875)
\psdots[dotsize=0.12](0.3809375,-1.0503125)
\psdots[dotsize=0.12](2.7809374,1.3496875)
\psdots[dotsize=0.12](2.7809374,0.5496875)
\psdots[dotsize=0.12](2.7809374,-0.2503125)
\psdots[dotsize=0.12](2.7809374,-1.0503125)
\psdots[dotsize=0.12](2.7809374,-1.8503125)
\psdots[dotsize=0.12](4.3809376,1.3496875)
\psdots[dotsize=0.12](4.3809376,0.5496875)
\psdots[dotsize=0.12](4.3809376,-0.2503125)
\psdots[dotsize=0.12](4.3809376,-1.0503125)
\psline[linewidth=0.04cm](0.3809375,2.1496875)(1.1809375,2.1496875)
\psline[linewidth=0.04cm](1.1809375,2.1496875)(1.9809375,2.1496875)
\psline[linewidth=0.04cm](1.9809375,2.1496875)(2.7809374,2.1496875)
\psline[linewidth=0.04cm](2.7809374,2.1496875)(3.1809375,2.1496875)
\psline[linewidth=0.04cm](4.3809376,2.1496875)(3.9809375,2.1496875)
\psline[linewidth=0.04cm](4.3809376,2.1496875)(5.1809373,2.1496875)
\psline[linewidth=0.04cm](4.3809376,2.1496875)(4.3809376,1.3496875)
\psline[linewidth=0.04cm](4.3809376,0.5496875)(4.3809376,1.3496875)
\psline[linewidth=0.04cm](4.3809376,-0.2503125)(4.3809376,0.5496875)
\psline[linewidth=0.04cm](4.3809376,-1.0503125)(4.3809376,-0.2503125)
\psline[linewidth=0.04cm](2.7809374,1.3496875)(2.7809374,2.1496875)
\psline[linewidth=0.04cm](1.1809375,1.3496875)(1.1809375,2.1496875)
\psline[linewidth=0.04cm](0.3809375,1.3496875)(0.3809375,2.1496875)
\psline[linewidth=0.04cm](0.3809375,0.5496875)(0.3809375,1.3496875)
\psline[linewidth=0.04cm](1.1809375,0.5496875)(1.1809375,1.3496875)
\psline[linewidth=0.04cm](2.7809374,0.5496875)(2.7809374,1.3496875)
\psline[linewidth=0.04cm](2.7809374,-0.2503125)(2.7809374,0.5496875)
\psline[linewidth=0.04cm](2.7809374,-1.0503125)(2.7809374,-0.2503125)
\psline[linewidth=0.04cm](2.7809374,-1.8503125)(2.7809374,-1.0503125)
\psline[linewidth=0.04cm](0.3809375,-0.2503125)(0.3809375,0.5496875)
\psline[linewidth=0.04cm](0.3809375,-1.0503125)(0.3809375,-0.2503125)
\usefont{T1}{ptm}{m}{n}
\rput(5.282344,1.8596874){$P_m$}
\usefont{T1}{ptm}{m}{n}
\rput(4.7123437,-1.3403125){$P_{m - 1}$}
\usefont{T1}{ptm}{m}{n}
\rput(2.8323438,-2.1403124){$P_4$}
\usefont{T1}{ptm}{m}{n}
\rput(2.0323439,1.8596874){$P_3$}
\usefont{T1}{ptm}{m}{n}
\rput(1.2323438,0.2596875){$P_2$}
\usefont{T1}{ptm}{m}{n}
\rput(0.43234375,-1.3403125){$P_1$}
\usefont{T1}{ptm}{m}{n}
\rput(3.6123438,2.1796875){$\ldots$}
\end{pspicture}
}
			\end{center}
			\caption{A haircomb tree.}
			\label{fig:haircomb}
			\end{figure}

	In the present note we further explore homometric sets in trees. 
     In Section~\ref{sec:trees},
	we prove the following theorem thereby improving the previously known best lower bound of $\Omega(n^{1/3})$ on size of homometric sets in trees given in~\cite{MariaLale}.

\begin{theorem}
\label{thm:TreesMain}
Any tree on $n$ vertices contains (a pair of disjoint) homometric sets of size at least $\sqrt{\frac{n}{2}} - \frac{1}{2}$.
\end{theorem}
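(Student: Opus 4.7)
\noindent\emph{Proof plan.} The plan is to take a longest path $P = v_0 v_1 \ldots v_L$ in $T$ and do a case analysis on $L$, with $m = \lceil \sqrt{n/2} - 1/2 \rceil$.

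If $L \geq 2m - 1$, then $A = \{v_0, \ldots, v_{m-1}\}$ and $B = \{v_{L-m+1}, \ldots, v_L\}$ are disjoint sub-paths of $P$, each on $m$ vertices. Both share the profile of a path on $m$ vertices (the multiset containing distance $k$ with multiplicity $m-k$ for $k = 1, \ldots, m-1$), so they are homometric of size $m \geq \sqrt{n/2} - 1/2$.

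The harder case is $L < 2m - 1$: the diameter of $T$ is then small while $T$ still has $n$ vertices, so at least $n - 2m + 2$ vertices lie in off-path subtrees $T_i$ rooted at the interior vertices $v_i$. The longest-path property forces $T_i$ to have depth at most $\min(i, L - i)$ from $v_i$ (in particular $T_0$ and $T_L$ are trivial). My plan here is to augment the halves $A, B$ by pairs of symmetrically placed off-path vertices: a vertex $u \in T_i$ at distance $d$ from $v_i$ is added to $A$, and a vertex $u' \in T_{L-i}$ at the same distance $d$ from $v_{L-i}$ is added to $B$. The distance from $u$ to any path vertex $v_j \in A$ is $d + |i - j|$, which matches the distance from $u'$ to the mirror vertex $v_{L-j} \in B$, so such symmetric pairings preserve homometry provided the internal metric between added vertices on the two sides also matches (i.e.\ the matching realises an isometric embedding of a subset of $T_i$ into $T_{L-i}$).

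The main obstacle is guaranteeing that enough matched pairs exist when $L$ is short. A clean subcase is when some interior $v_i$ has many pendant leaves: they form a twin set (pairwise at distance $2$) which splits into homometric halves of size $\Omega(\sqrt{n/2})$ directly. For the general short-path regime I expect to need either an adaptive re-choice of $P$ through a centroid-like vertex, so that the subtree masses are balanced across the midpoint and symmetric matchings are plentiful, or a recursive matching argument between $T_i$ and $T_{L-i}$ at each pair of symmetric positions. The final step is a counting argument, trading off $L$ against the total off-path mass $n - L - 1$, which shows that the resulting augmented sets always reach size at least $\sqrt{n/2} - 1/2$ and thereby closes the gap that the path halves alone leave in the short-path case.
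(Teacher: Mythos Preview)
Your long-path case ($L \ge 2m-1$) is correct and is exactly what the paper does. The gap is entirely in the short-path case, where what you have is a plan, not a proof.

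The symmetric-matching idea --- pair a vertex at depth $d$ in the off-path subtree $T_i$ with one at depth $d$ in $T_{L-i}$ --- fails because nothing forces $T_i$ and $T_{L-i}$ to have comparable sizes. Take $L = 4k$ and hang a complete binary tree of depth $k$ off $v_k$, with no other off-path vertices. One checks that $P$ is still a longest path, $n \approx 2^{k}$, and $L \approx 4\log_2 n \ll \sqrt{2n}$, so you are in the short case. Here $T_{L-k} = T_{3k}$ is empty, so your symmetric matching adds nothing to $B$; the twin subcase gives only pairs of siblings (size $2$); and the path halves give only $L/2 = O(\log n)$. Your augmented sets stay at $O(\log n)$ while the target is $\Theta(\sqrt{n})$. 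The vague fixes you mention (re-choosing $P$ through a centroid, ``recursive matching between $T_i$ and $T_{L-i}$'') do not address this, because the asymmetry is intrinsic to the tree, not to the choice of $P$.

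What the paper actually does in the short case is abandon the path halves entirely. It roots $T$ at the midpoint $r$ of $P$, so that $h(T_r) \le \sqrt{n/2}$, and then uses a pairing that operates at \emph{every} branching vertex, not just along $P$: at a vertex $v$ with children $v_1,\dots,v_k$ ordered by subtree height, either recurse into all $T_{v_i}$, or pair the longest root--leaf path in $T_{v_{2i-1}}$ with one of equal length in $T_{v_{2i}}$ for each $i$. Writing $f(T_r)$ for the best size achievable this way, an induction on the height gives $f(T_r) \ge \dfrac{|V|}{2\,h(T_r)} - \dfrac{1}{2}$, and plugging in $h(T_r)\le\sqrt{n/2}$ finishes. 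The point you are missing is that the pairing must happen between \emph{sibling} subtrees everywhere in the tree, not between subtrees placed symmetrically on the diameter path.
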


We were able to obtain a better lower bound in case of haircomb trees whose proof is deferred to Section~\ref{sec:haircomb}.
\begin{theorem}
\label{thm:HairComb}
Any haircomb tree on $n$ vertices contains (a pair of disjoint) homometric sets of size at least $cn^{2/3}$, for a fixed constant $c>0$.
\end{theorem}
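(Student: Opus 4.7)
The plan is to split into three cases based on $m$ (the number of legs) and $L = \max_i \ell_i$ (the longest leg length), for a suitable absolute constant $\alpha > 0$. If $L \geq \alpha n^{2/3}$, I would take the longest leg (an isometrically embedded path in the haircomb) and split it into its first and second halves: these are vertex-disjoint congruent subpaths and hence homometric, of size $\Omega(n^{2/3})$. A symmetric argument works when $m \geq \alpha n^{2/3}$, using the spine in place of the leg. Thus the only interesting regime is $L, m < \alpha n^{2/3}$, in which $mL = O(n^{4/3})$.

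For this regime I would use a \emph{rectangle} construction. Labeling the vertex at height $h \geq 0$ on leg $i$ by $(i, h)$, the tree distance equals $|h_1 - h_2|$ when $i_1 = i_2$ and $|i_1 - i_2| + h_1 + h_2$ otherwise. Write $I_h := \{i : \ell_i \geq h\}$. For any $S \subseteq I_h$ and integer $t \geq 1$ with $S + t \subseteq I_h$ and $S \cap (S + t) = \emptyset$, the two sets
\[
A = S \times \{0, 1, \ldots, h-1\}, \qquad B = (S + t) \times \{0, 1, \ldots, h-1\}
\]
are vertex-disjoint, and an inspection of the distance formula shows that they are homometric of size $h \cdot |S|$: shifting every leg index by $t$ preserves the entire multiset of pairwise distances inside the rectangle. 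It therefore suffices to produce $h$, $t$, $S$ with $h \cdot |S| = \Omega(n^{2/3})$.

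I would obtain these by two averaging steps. Since $\sum_{h \geq 1} |I_h| = \sum_i \ell_i = n$, assuming that $h |I_h|^2 < c\, n^{2/3} m$ for every $h$ yields $|I_h| < \sqrt{c\, n^{2/3} m/h}$, so $n < \sqrt{c\, n^{2/3} m} \sum_{h=1}^{L} h^{-1/2} \leq 2 \sqrt{c\, n^{2/3} m L}$, which forces $mL > n^{4/3}/(4c)$; for $c$ sufficiently small this contradicts $mL = O(n^{4/3})$, so some $h$ satisfies $h |I_h|^2 \geq c\, n^{2/3} m$. For that $h$, distributing the $\binom{|I_h|}{2}$ pairs of $I_h$ among the $m - 1$ possible positive differences and pigeonholing supplies some $t$ for which $T := \{i \in I_h : i + t \in I_h\}$ has $|T| \geq |I_h|^2/(3m)$. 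The graph on $T$ whose edges are $\{i, i+t\}$ has maximum degree two and is acyclic, hence is a disjoint union of paths; it therefore contains an independent set $S$ of size at least $|T|/2$. By construction $S + t \subseteq I_h$ and $S \cap (S + t) = \emptyset$, so the rectangles $A$, $B$ form a pair of vertex-disjoint homometric sets of size $h \cdot |S| \geq h |I_h|^2 / (6m) = \Omega(n^{2/3})$.

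The main point of potential difficulty is the last step: obtaining a large $S$ that simultaneously satisfies $S + t \subseteq I_h$ and $S \cap (S + t) = \emptyset$. The observation that the relevant graph is a disjoint union of paths resolves both constraints in one stroke, at the cost of a factor of two that is absorbed into the constant.
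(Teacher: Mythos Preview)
Your argument is correct. Both your proof and the paper's hinge on the same underlying observation---that translating a set of vertices along the spine by a fixed amount $t$ preserves all pairwise distances---together with the easy bounds $\lfloor m/2\rfloor$ and $\lfloor L/2\rfloor$ coming from the spine and the longest leg. The implementations differ, however. The paper sorts the legs by length, splits them into odd- and even-ranked halves, and averages the total overlap over all spine shifts of one half against the other; a single averaging step then yields an overlap of size $\Omega(n^2/(mL))$, which balanced against $m$ and $L$ gives $\Omega(n^{2/3})$. You instead fix a height $h$ first (chosen by an averaging argument over the level sets $I_h$ so that $h|I_h|^2$ is large relative to $m$), then pigeonhole on differences inside $I_h$ to find a popular shift $t$, and finally extract a large $t$-independent subset $S\subseteq I_h$ to produce explicit disjoint rectangles. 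Your route trades one averaging step for two, but the intermediate objects (the level sets $I_h$ and the rectangular blocks) are more explicit, and the disjointness is handled cleanly by the observation that the ``differ-by-$t$'' graph is a union of paths. Either approach yields the same $cn^{2/3}$ bound.
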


\section{Trees}

\label{sec:trees}
	In this section we show that a tree on $n$ vertices contains
	homometric sets of size at least \\ $\sqrt{n/2} - 1$. Moreover, we show a slightly better
    bound for binary trees and prove that our construction, in general, cannot yield a better bound.

    Let us start with some additional definitions.
    By $T=T_r = (V, E)$, where $r \in V$, we denote a tree $T$ rooted at $r$.
	By $h(T_r)$ we denote the height of $T_r$ increased by 1, i.e. the number of vertices on the longest path in $T_r$
	starting at $r$. For example, if $T_r = (V, E)$ and $|V| = 1$, then $h(T_r) = 1$.
	By $T_{\emptyset}$ we denote the empty tree.

	Let $T_r = (V, E)$ and let $v_1, \ldots, v_k$ denote the children of $r$.
	Let $C(T_r) = \{T_{v_1}, \ldots, T_{v_k}\}$ such that
	$r \notin \bigcup_{i = 1}^{k}{V(T_{v_i})}$;
	$\{r\} \cup \bigcup_{i = 1}^{k}{V(T_{v_i})} = V$; and
	$T_{v_i} \neq T_{\emptyset}$ for $1 \le i < k$.
	Let us assume w.l.o.g. that $h(T_{v_i}) \ge h(T_{v_{i + 1}})$ for $1 \le i < k$
	and that $k$ is an even number. Thus, by slightly abusing our notation we allow that $T_{v_k} = T_{\emptyset}$.

    In what follows we construct homometric sets of $T$ of the desired size using a pairing strategy which extends
    the technique from the proof of Theorem 4 in~\cite{MariaLale}.
    For a rooted tree, the pairing strategy in~\cite{MariaLale} gives a one-to-one correspondence between vertices in two constructed homometric sets such that two vertices in each pair are siblings and, thus, have the same distance from the root.
    We extended their approach by pairing paths of the same length that start at siblings.
    More formally, let $V_1,V_2\subseteq V$, such that $V_1\cap V_2=\emptyset$, $|V_1|=|V_2|$, and $V_1$ and $V_2$, resp., induce in $T$
    a collection of vertex disjoint paths $\mathcal{P}_1$ and $\mathcal{P}_2$, resp., so that there exists a bijection between
    $\mathcal{P}_1$ and $\mathcal{P}_2$, which maps a path starting at a vertex $v$ to a path having the same length starting at a sibling of $v$.

    We say that two paths $P_1$ and $P_2$ of $T$ are {\em independent}, if there exists no root-leaf path in $T$ sharing a vertex with both $P_1$ and $P_2$.
    Our construction is based on the following simple observation.

    \begin{observation}\label{obs:pairing-sets}
        If every pair of paths in $\mathcal{P}_1 \cup \mathcal{P}_2$ is independent then $V_1$ and $V_2$ are homometric sets of $T$.
	\end{observation}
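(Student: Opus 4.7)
The plan is to exhibit a distance-preserving bijection $\psi\colon V_1\to V_2$ built from the bijection $\phi\colon\mathcal{P}_1\to\mathcal{P}_2$ provided in the setup. Concretely, each $u\in V_1$ can be uniquely written as the vertex at position $s$ on some $P\in\mathcal{P}_1$, measured from the starting vertex $v_P$ of $P$; I then set $\psi(u)$ to be the vertex at position $s$ on $\phi(P)$, measured from its starting vertex $v'_P$. This is well-defined since the paths in $\mathcal{P}_1$ are vertex-disjoint and $\phi$ preserves path lengths, and it is a bijection onto $V_2$. It then suffices to verify that $d_T(u,w)=d_T(\psi(u),\psi(w))$ for every $u,w\in V_1$.

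I would split into two cases. When $u,w$ lie on the same path $P$, their distance is $|s-s'|$ (the difference of their positions), and the same value arises for $\psi(u),\psi(w)$ on the equally long $\phi(P)$. When $u$ lies at position $s$ on $P$ and $w$ at position $t$ on a distinct $Q\in\mathcal{P}_1$, I interpret the paths as descending from their starting vertices. The independence of $P$ and $Q$, unpacked, forces that no vertex of $P$ is an ancestor in $T$ of any vertex of $Q$, and vice versa: otherwise one could extend such an ancestor-descendant pair to a root-to-leaf path meeting both paths. Consequently the unique $u$--$w$ path in $T$ must traverse $v_P$ and $v_Q$, yielding $d_T(u,w)=s+d_T(v_P,v_Q)+t$, and analogously $d_T(\psi(u),\psi(w))=s+d_T(v'_P,v'_Q)+t$.

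Thus the remaining task is to verify $d_T(v_P,v_Q)=d_T(v'_P,v'_Q)$. Let $p_P$ be the common parent of $v_P$ and $v'_P$, and $p_Q$ the common parent of $v_Q$ and $v'_Q$. Using the identity $d_T(x,y)=\mathrm{depth}(x)+\mathrm{depth}(y)-2\,\mathrm{depth}(\mathrm{LCA}(x,y))$ and the fact that siblings share depth, the problem reduces to showing $\mathrm{LCA}(v_P,v_Q)=\mathrm{LCA}(v'_P,v'_Q)$. By independence neither $v_P$ nor $v_Q$ is an ancestor of the other, so $\mathrm{LCA}(v_P,v_Q)$ is a proper ancestor of both, hence a common ancestor of $p_P$ and $p_Q$; a standard minimality check identifies it with $\mathrm{LCA}(p_P,p_Q)$, and the same reasoning yields $\mathrm{LCA}(v'_P,v'_Q)=\mathrm{LCA}(p_P,p_Q)$.

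The only mildly subtle step is the translation of the geometric independence condition into the algebraic statement that no vertex of $P$ is an ancestor of any vertex of $Q$ under the descending interpretation of the paths; the rest is routine bookkeeping with depths and least common ancestors in a tree. Once this translation is in place the distance computation assembles immediately, and the observation follows.
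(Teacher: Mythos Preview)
Your proof is correct and follows essentially the same route as the paper's: both reduce the equality of distances to a statement about least common ancestors, using that paired vertices sit at equal depths below sibling starting points. The paper's argument is a terse two-sentence sketch asserting only that the two LCAs lie at the same depth, whereas you spell out the bijection $\psi$ explicitly and prove the slightly stronger fact that both LCAs coincide with $\mathrm{LCA}(p_P,p_Q)$; your remark that the paths must be descending is in fact forced by the independence of $P$ and $\phi(P)$ (a root--leaf path through the sibling start of $\phi(P)$ would otherwise hit the parent on $P$), so the ``interpretation'' is justified.
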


    \begin{proof}
    Fix two pairs of vertices $(v_1,v_2)$ and $(u_1,u_2)$ such that vertices in both pairs belong to two
    paired paths, respectively, and have the same distance from the root.
    Since every pair of paths in $\mathcal{P}_1 \cup \mathcal{P}_2$ is independent, the least common ancestor of $v_1$ and $u_1$ has the same distance from the root as the least common ancestor of $v_2$ and $u_2$.
    It follows that the distance between $v_1$ and $u_1$ is the same as the distance between $v_2$ and $u_2$.
    \end{proof}

    In the light of the previous observation the following recursively defined function $f$ gives a lower bound on the size of homometric
    sets obtained by our pairing strategy

	\begin{definition}\label{def:pairing}

		\begin{subnumcases}{f(T_r) \stackrel{\rm{def}}{=} }
			0 & if $|V(T_r)| \le 1$ \label{pairing-1st-case} \\
			\max{\left\{\sum_{i = 1}^{k}{f(T_{v_i})}, \sum_{i = 1}^{\frac{k}{2}}{h(T_{v_{2 i}})}\right\} } &
				if $|V(T_r)| > 1$ and $C(T_r) = \{T_{v_1}, \ldots, T_{v_k}\}$ \label{pairing-2nd-case}
		\end{subnumcases}
	\end{definition}
	Let $P_{v_j}$ be a longest path in $T_{v_j}$ such that $v_j$ is one of its ends.
	The summand $h(T_{v_{2 i}})$ in Definition~\ref{def:pairing} accounts for
	the pairing of the path $P_{v_{2 i}}$ with the subpath of
	$P_{v_{2 i - 1}}$ starting at $v_{2 i - 1}$ of the size $|P_{v_{2 i}}|$.
	Note that $|P_{v_{2 i - 1}}| \ge |P_{v_{2 i}}|$ by the assumption.
	An illustration of the pairing strategy specified in Definition~\ref{def:pairing} is provided in
	Figure \ref{fig:pairing-strategy}.
	
	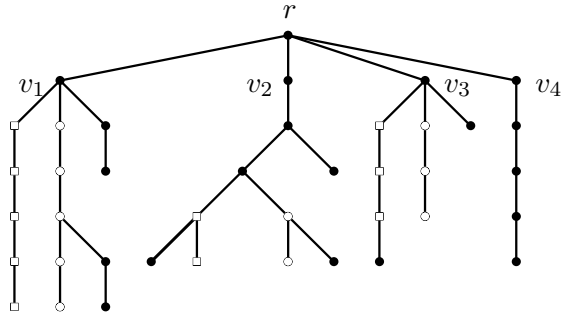
\begin{figure}[h]
	\begin{center}
\scalebox{1} 
{
\begin{pspicture}(0,-2.0792189)(7.7028127,2.0992188)
\psline[linewidth=0.04cm](2.5809374,-0.7992188)(1.9809375,-1.3992188)
\psline[linewidth=0.03cm](3.7809374,-0.7992188)(4.3809376,-1.3992188)
\psline[linewidth=0.03cm](4.9809375,-0.7992188)(4.9809375,-1.3992188)
\psline[linewidth=0.03cm](4.9809375,-0.19921875)(4.9809375,-0.7992188)
\psline[linewidth=0.03cm](4.9809375,0.40078124)(4.9809375,-0.19921875)
\psline[linewidth=0.03cm](5.5809374,1.0007813)(4.9809375,0.40078124)
\psline[linewidth=0.03cm](0.7809375,1.0007813)(0.1809375,0.40078124)
\psline[linewidth=0.03cm](0.1809375,-1.3992188)(0.1809375,-1.9992187)
\psline[linewidth=0.03cm](0.1809375,-0.7992188)(0.1809375,-1.3992188)
\psline[linewidth=0.03cm](0.1809375,-0.19921875)(0.1809375,-0.7992188)
\psline[linewidth=0.03cm](0.1809375,0.40078124)(0.1809375,-0.19921875)
\psline[linewidth=0.03cm](2.5809374,-0.7992188)(2.5809374,-1.3992188)
\psline[linewidth=0.03cm](3.1809375,-0.19921875)(2.5809374,-0.7992188)
\psline[linewidth=0.03cm](3.1809375,-0.19921875)(3.7809374,-0.7992188)
\psline[linewidth=0.03cm](5.5809374,-0.19921875)(5.5809374,-0.7992188)
\psline[linewidth=0.03cm](5.5809374,-0.19921875)(5.5809374,0.40078124)
\psline[linewidth=0.03cm](5.5809374,1.0007813)(5.5809374,0.40078124)
\psline[linewidth=0.03cm](3.7809374,-0.7992188)(3.7809374,-1.3992188)
\psline[linewidth=0.03cm](0.7809375,1.0007813)(0.7809375,0.40078124)
\psline[linewidth=0.03cm](1.3809375,-1.9992187)(1.3809375,-1.3992188)
\psline[linewidth=0.03cm](0.7809375,-0.7992188)(1.3809375,-1.3992188)
\psline[linewidth=0.03cm](0.7809375,0.40078124)(0.7809375,-0.19921875)
\psline[linewidth=0.03cm](0.7809375,-1.3992188)(0.7809375,-1.9992187)
\psline[linewidth=0.03cm](0.7809375,-0.7992188)(0.7809375,-1.3992188)
\psline[linewidth=0.03cm](0.7809375,-0.19921875)(0.7809375,-0.7992188)
\psdots[dotsize=0.12](3.7809374,1.6007812)
\psdots[dotsize=0.12](0.7809375,1.0007813)
\psdots[dotsize=0.12,fillstyle=solid,dotstyle=square](0.1809375,0.40078124)
\psdots[dotsize=0.12,fillstyle=solid,dotstyle=o](0.7809375,0.40078124)
\psdots[dotsize=0.12](1.3809375,0.40078124)
\psdots[dotsize=0.12](1.3809375,-0.19921875)
\psdots[dotsize=0.12,fillstyle=solid,dotstyle=o](0.7809375,-0.19921875)
\psdots[dotsize=0.12,fillstyle=solid,dotstyle=square](0.1809375,-0.19921875)
\psdots[dotsize=0.12,fillstyle=solid,dotstyle=o](0.7809375,-0.7992188)
\psdots[dotsize=0.12,fillstyle=solid,dotstyle=square](0.1809375,-0.7992188)
\psdots[dotsize=0.12,fillstyle=solid,dotstyle=square](0.1809375,-1.3992188)
\psdots[dotsize=0.12,fillstyle=solid,dotstyle=o](0.7809375,-1.3992188)
\psdots[dotsize=0.12](1.3809375,-1.3992188)
\psdots[dotsize=0.12](1.3809375,-1.9992187)
\psdots[dotsize=0.12,fillstyle=solid,dotstyle=o](0.7809375,-1.9992187)
\psdots[dotsize=0.12,fillstyle=solid,dotstyle=square](0.1809375,-1.9992187)
\psline[linewidth=0.03cm](0.7809375,1.0007813)(1.3809375,0.40078124)
\psline[linewidth=0.03cm](1.3809375,0.40078124)(1.3809375,-0.19921875)
\psdots[dotsize=0.12](3.7809374,1.0007813)
\psdots[dotsize=0.12](3.7809374,0.40078124)
\psdots[dotsize=0.12](4.3809376,-0.19921875)
\psdots[dotsize=0.12](3.1809375,-0.19921875)
\psdots[dotsize=0.12,fillstyle=solid,dotstyle=o](3.7809374,-0.7992188)
\psdots[dotsize=0.12,fillstyle=solid,dotstyle=o](3.7809374,-1.3992188)
\psdots[dotsize=0.12](4.3809376,-1.3992188)
\psdots[dotsize=0.12,fillstyle=solid,dotstyle=square](2.5809374,-1.3992188)
\psdots[dotsize=0.12](5.5809374,1.0007813)
\psdots[dotsize=0.12,fillstyle=solid,dotstyle=square](4.9809375,0.40078124)
\psdots[dotsize=0.12,fillstyle=solid,dotstyle=o](5.5809374,0.40078124)
\psdots[dotsize=0.12](6.1809373,0.40078124)
\psdots[dotsize=0.12,fillstyle=solid,dotstyle=square](4.9809375,-0.19921875)
\psdots[dotsize=0.12](6.7809377,0.40078124)
\psdots[dotsize=0.12](6.7809377,-0.19921875)
\psdots[dotsize=0.12](6.7809377,1.0007813)
\psline[linewidth=0.03cm](6.7809377,1.0007813)(6.7809377,0.40078124)
\psline[linewidth=0.03cm](6.7809377,0.40078124)(6.7809377,-0.19921875)
\psdots[dotsize=0.12,fillstyle=solid,dotstyle=o](5.5809374,-0.19921875)
\psdots[dotsize=0.12,fillstyle=solid,dotstyle=o](5.5809374,-0.7992188)
\psdots[dotsize=0.12,fillstyle=solid,dotstyle=square](4.9809375,-0.7992188)
\psdots[dotsize=0.12](4.9809375,-1.3992188)
\psdots[dotsize=0.12](6.7809377,-0.7992188)
\psdots[dotsize=0.12](6.7809377,-1.3992188)
\psline[linewidth=0.03cm](6.7809377,-0.19921875)(6.7809377,-0.7992188)
\psline[linewidth=0.03cm](6.7809377,-0.7992188)(6.7809377,-1.3992188)
\psline[linewidth=0.03cm](3.7809374,1.6007812)(3.7809374,1.0007813)
\psline[linewidth=0.03cm](3.7809374,1.0007813)(3.7809374,0.40078124)
\psline[linewidth=0.03cm](3.7809374,0.40078124)(3.1809375,-0.19921875)
\psline[linewidth=0.03cm](3.7809374,0.40078124)(4.3809376,-0.19921875)
\psdots[dotsize=0.12,fillstyle=solid,dotstyle=square](2.5809374,-0.7992188)
\psline[linewidth=0.03cm](0.7809375,1.0007813)(3.7809374,1.6007812)
\psline[linewidth=0.03cm](3.7809374,1.6007812)(5.5809374,1.0007813)
\psline[linewidth=0.03cm](3.7809374,1.6007812)(6.7809377,1.0007813)
\psline[linewidth=0.03cm](5.5809374,1.0007813)(6.1809373,0.40078124)
\psdots[dotsize=0.12](1.9809375,-1.3992188)
\usefont{T1}{ptm}{m}{n}
\rput(3.8023438,1.9107813){$r$}
\usefont{T1}{ptm}{m}{n}
\rput(0.41234374,0.91078126){$v_1$}
\usefont{T1}{ptm}{m}{n}
\rput(3.4123437,0.91078126){$v_2$}
\usefont{T1}{ptm}{m}{n}
\rput(6.012344,0.91078126){$v_3$}
\usefont{T1}{ptm}{m}{n}
\rput(7.2123437,0.91078126){$v_4$}
\end{pspicture}
}

	\end{center}
	\caption{An illustration of the pairing strategy implied by Definition \ref{def:pairing}. The set of the empty circles and
	the set of the empty squares represent two disjoint homometric sets of the size 10. Note that in this example
	$\sum_{i = 1}^4{f(T_{v_i})} = \sum_{i = 1}^2{h(T_{v_{2 i}})}$.
	}
	\label{fig:pairing-strategy}
	\end{figure}
	
Hence, by the previous paragraph and Observation~\ref{obs:pairing-sets} we have the following.

	\begin{lemma}\label{lemma:pairing-sets}
		A tree $T_r$ contains two disjoint homometric sets of size at least $f(T_r)$.
	\end{lemma}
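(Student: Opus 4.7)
The plan is to prove the lemma by induction on $|V(T_r)|$, building along the way the two homometric sets $V_1,V_2$ together with the collections of vertex-disjoint paths $\mathcal{P}_1,\mathcal{P}_2$ and the sibling-preserving bijection required by Observation~\ref{obs:pairing-sets}. The base case $|V(T_r)|\le 1$ is immediate since $f(T_r)=0$ and the empty sets trivially satisfy the observation's hypotheses. For the inductive step, write $C(T_r)=\{T_{v_1},\ldots,T_{v_k}\}$ with $h(T_{v_i})\ge h(T_{v_{i+1}})$; by Definition~\ref{def:pairing}, $f(T_r)$ equals the larger of two quantities, so I would handle each case separately and, in each case, exhibit $V_1,V_2,\mathcal{P}_1,\mathcal{P}_2$ meeting the hypotheses of Observation~\ref{obs:pairing-sets}.

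In the first case, when $f(T_r)=\sum_{i=1}^k f(T_{v_i})$, I would apply the inductive hypothesis to each $T_{v_i}$ to obtain pairs $(V_1^i,V_2^i)$ of size $f(T_{v_i})$ with witnessing path collections and bijections, and then simply take $V_j=\bigcup_i V_j^i$ and $\mathcal{P}_j=\bigcup_i \mathcal{P}_j^i$, together with the concatenation of the per-subtree bijections. Within each $T_{v_i}$ the bijection is already sibling-preserving, and paths from different subtrees $T_{v_i}$ and $T_{v_j}$ are automatically independent in $T_r$ since no root-leaf path of $T_r$ can visit two subtrees rooted at distinct children of $r$. I would also note that independence within a single $T_{v_i}$ lifts to independence in $T_r$, because attaching the root $r$ on top does not change which internal vertices any root-leaf path visits inside $T_{v_i}$.

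In the second case, when $f(T_r)=\sum_{i=1}^{k/2}h(T_{v_{2i}})$, I would for each $i$ let $P_{v_{2i}}$ be a longest $v_{2i}$-rooted path inside $T_{v_{2i}}$, of length $h(T_{v_{2i}})$, and let $Q_{v_{2i-1}}$ be the initial segment of $P_{v_{2i-1}}$ of the same length starting at $v_{2i-1}$; the inequality $h(T_{v_{2i-1}})\ge h(T_{v_{2i}})$ ensures the latter segment exists. Taking $V_1=\bigcup_i V(P_{v_{2i}})$ and $V_2=\bigcup_i V(Q_{v_{2i-1}})$ with the natural bijection $P_{v_{2i}}\leftrightarrow Q_{v_{2i-1}}$, each matched pair starts at the siblings $v_{2i},v_{2i-1}$ and has equal length, so the bijection is sibling-preserving. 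Any two paths in $\mathcal{P}_1\cup\mathcal{P}_2$ lie in subtrees rooted at distinct children of $r$, hence are independent, and Observation~\ref{obs:pairing-sets} yields that $V_1,V_2$ are homometric of total size $\sum_{i=1}^{k/2}h(T_{v_{2i}})=f(T_r)$.

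The only real obstacle is the bookkeeping to confirm the independence and sibling-preservation conditions are maintained under the recursion; the key points are (i) that independence of paths internal to $T_{v_i}$ transfers to the larger tree $T_r$, and (ii) that across different children of $r$ independence is automatic, which is exactly what makes the $\max$ in Definition~\ref{def:pairing} an admissible choice at each step.
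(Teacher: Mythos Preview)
Your proof is correct and follows essentially the same approach as the paper: both rely on Observation~\ref{obs:pairing-sets} together with the pairing strategy encoded by Definition~\ref{def:pairing}, with your version spelling out explicitly the induction and the verification of independence and sibling-preservation that the paper leaves implicit in the sentence ``by the previous paragraph and Observation~\ref{obs:pairing-sets}.'' The only minor remark is that your induction really proves a strengthened statement (existence of $V_1,V_2$ \emph{together with} the path collections and bijection satisfying the hypotheses of Observation~\ref{obs:pairing-sets}), which you acknowledge in your opening sentence; this strengthening is exactly what is needed, and the paper's informal treatment relies on the same idea.
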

	
The main ingredient of the proof of Theorem~\ref{thm:TreesMain} is the next lemma.

	\begin{lemma}\label{lemma:pairing-value}
		For every tree $T_r = (V, E)$
			\begin{equation}\label{pairing-inequality}
				f(T_r) \ge \frac{|V|}{2 h(T_r)} - \frac{1}{2}
			\end{equation}
	\end{lemma}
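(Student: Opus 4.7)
I will prove the inequality by strong induction on $n := |V(T_r)|$. The base case $n = 1$ is immediate (both sides vanish; note also that the right-hand side $\frac{n-h(T_r)}{2h(T_r)}$ is non-negative for every tree, since $|V|\ge h(T_r)$). For the inductive step put $h := h(T_r)\ge 2$, and let $v_1,\dots,v_{k'}$ be the children of $r$ with $h_i := h(T_{v_i})$ and $n_i := |V(T_{v_i})|$, indexed so that $h_1 \ge h_2 \ge \cdots \ge h_{k'}$ (so $h_1 = h-1$ and, for every $i$, $n_i \ge h_i$ and $h_i \le h-1$). Let $A$ and $B$ be the two summands appearing in Definition~\ref{def:pairing}, so $f(T_r) = \max(A,B)$. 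My plan is to assume, toward a contradiction, that $\max(A,B) < \frac{n-h}{2h}$ and to derive two incompatible bounds on $\sigma := \sum_{i=1}^{k'} h_i$ from the two halves of this assumption.

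\textbf{Deriving the two bounds.} From $A < \frac{n-h}{2h}$ and the inductive hypothesis $f(T_{v_i}) \ge \frac{n_i-h_i}{2h_i}$, one obtains $\sum_{i} \frac{n_i-h_i}{h_i} < \frac{n-h}{h}$. Since $n_i-h_i\ge 0$ and $h_i\le h-1$, every denominator on the left can be replaced by $h-1$, yielding $\frac{n-1-\sigma}{h-1} < \frac{n-h}{h}$, which rearranges to
\[
\sigma \;>\; (h-1) + \frac{n-h}{h}.
\]
From $B < \frac{n-h}{2h}$ together with the combinatorial estimate $2B \ge \sum_{i=2}^{k'} h_i$ (see below), one gets $\sum_{i=2}^{k'} h_i < \frac{n-h}{h}$; adding $h_1 \le h-1$ produces
\[
\sigma \;<\; (h-1) + \frac{n-h}{h}.
\]
These two inequalities contradict each other, so $\max(A,B) \ge \frac{n-h}{2h}$ and the induction closes.

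\textbf{Main obstacle.} The entire argument rests on the combinatorial inequality $2B \ge \sum_{i=2}^{k'} h_i$, which is the only place the ordering $h_1\ge h_2\ge\cdots$ is actually used. It follows by pairing consecutive heights $(h_2,h_3),(h_4,h_5),\dots$ and noting that $h_{2j}+h_{2j+1}\le 2h_{2j}$ in each pair, so summing gives at most $2(h_2+h_4+\cdots)=2B$. This formalises the intuition that the pair-at-the-root option $B$ necessarily captures at least half of the ``mass'' of the non-tallest children's heights; without this ingredient the $A$-estimate alone is insufficient as soon as the tree departs significantly from being a single path.
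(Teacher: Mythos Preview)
Your proof is correct and follows essentially the same approach as the paper: the paper also inducts (on height rather than $n$), uses the identical estimate $2B \ge \sum_{i\ge 2} h_i$ (derived there via the telescoping bound $\Delta \le h_1$), and splits into the two cases $\sum_{i\ge 2} h_i \ge \frac{n}{h}-1$ (use $B$) and $\sum_{i\ge 2} h_i < \frac{n}{h}-1$ (use $A$ with the inductive hypothesis), which is precisely your contradiction argument in contrapositive form. The only differences are cosmetic packaging---induction on $n$ versus $h$, and a contradiction versus a case split---with the same threshold $(h-1)+\frac{n-h}{h}=\frac{n}{h}-1+h_1$ appearing in both.
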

	\begin{proof}
		We give a proof by induction on the height of $T_r$.
		\begin{description}
			\item[Basic step.] Let $T_r$ be a tree, and $h(T_r) = 1$. By the definition of the function $f$ it follows:
				\[
					f(T_r) \stackrel{\eqref{pairing-1st-case}}{=} 0
				\]
				On the other hand
				\[
					\frac{|V(T_r)|}{2 h(T_r)} - \frac{1}{2} = 0
				\]

			\item[Inductive step.] Let $T_r$ be a tree on $n$ vertices, and $g = h(T_r) \ge 2$.
				We assume that for every tree $T_{r'}$ such
				that $h(T_{r'}) < g$ the inequality~\eqref{pairing-inequality} holds, and we
				prove that~\eqref{pairing-inequality} holds for $T_r$ as well.
				\\
				
				Let $C(T_r) = \{T_{v_1}, \ldots, T_{v_k}\}$, and let $g_i$ denote $h(T_{v_i})$, for $i = 1 \ldots k$.
				Let $S = \sum_{i = 2}^{k}{g_i}$. We consider two cases:
				\begin{description}
					\item[Case 1.] Let us assume $S \ge \frac{n}{g} - 1$. Let
						$\Delta = \sum_{i = 1}^{\frac{k}{2}}{(g_{2 i - 1} - g_{2 i})}$. We bound $\Delta$
						as follows:
						\begin{equation}\label{delta-bound}
							\Delta = \sum_{i = 1}^{\frac{k}{2}}{(g_{2 i - 1} - g_{2 i})}
										 \le g_1 - g_2 + \sum_{i = 2}^{\frac{k}{2}}{(g_{2 i - 2} - g_{2 i})}
										 = g_1 - g_k
										 \le g_1
						\end{equation}
						By~\eqref{pairing-2nd-case} in Definition~\ref{def:pairing} we have:
						\begin{equation}\label{case1-pairing-bound}
							f(T_r) \ge \sum_{i = 1}^{\frac{k}{2}}{g_{2 i}}
							= \sum_{i = 1}^{\frac{k}{2}}{\frac{g_{2i - 1} + g_{2 i} - (g_{2i - 1} - g_{2 i})}{2}}
							= \frac{g_1 + S - \Delta}{2}
						\end{equation}
						The upper-bound~\eqref{delta-bound} along with~\eqref{case1-pairing-bound} implies:
						\[
							f(T_r) \ge \frac{S}{2} \ge \frac{n}{2 g} - \frac{1}{2}
						\]
						Therefore, \eqref{pairing-inequality} holds in this case.
						
					\item[Case 2.] Let us assume $S < \frac{n}{g} - 1$. Let $n_i = V(T_{v_i})$.
					 	By~\eqref{pairing-2nd-case} in Definition~\ref{def:pairing} and the inductive hypothesis we conclude:
						\begin{equation}\label{pairing-case2}
							f(T_r) \ge \sum_{i = 1}^{k}{f(T_{v_i})}
							\ge \sum_{i = 1}^{k}{\frac{n_i - g_i}{2 g_i}}
							\ge \sum_{i = 1}^{k}{\frac{n_i - g_i}{2 g_1}}
							= \frac{(n - 1) - (g_1 + S)}{2 g_1}
						\end{equation}
						Applying the condition of the present case and observation $g_1 = g - 1$ to
						the expression~\eqref{pairing-case2}, we obtain the following:
						\[
							f(T_r) \ge \frac{n - 1 - \big(\frac{n}{g} - 1\big)}{2 g_1} - \frac{1}{2}
							= \frac{\frac{n(g - 1)}{g}}{2 g_1} - \frac{1}{2}
							= \frac{n}{2 g} - \frac{1}{2}
						\]
				\end{description}
		\end{description}
		This completes the proof.
	\end{proof}
	

Finally, we are in a position to give the proof of the main result of this section. \\

	\begin{proofTrees}
		Let $T$ be a tree on $n$ vertices. Let $P = v_1 v_2 \ldots v_g$ be a longest path in $T$. If $g$ is an even number
		let $g' = g$, otherwise $g' = g - 1$.
		If $g \ge \sqrt{2 n}$ then  $\{v_1, \ldots, v_{g'/2}\}$ and
		 $\{v_{g'/2 + 1}, \ldots, v_{g'}\}$ are two disjoint homometric sets of
		size at least $\sqrt{n/2}$.
		\\\\
		If $g < \sqrt{2 n}$ let $r = v_{g'/2 + 1}$ and consider $T_r$. Since
		$h(T_r) \le \sqrt{n/2}$, by Lemma~\ref{lemma:pairing-sets} and
		Lemma~\ref{lemma:pairing-value}
		we conclude that $T_r$ contains two disjoint homometric sets of size at least
		\[
			f(T_r)
			\ge \frac{n}{2 h(T_r)} - \frac{1}{2}
			\ge \frac{n}{2 \sqrt{\frac{n}{2}}} - \frac{1}{2}
			= \sqrt{\frac{n}{2}} - \frac{1}{2}
		\]
		
		This concludes the proof.
	\end{proofTrees}
	
\subsection{Binary trees}\label{sec:binary-trees}
	Observe that for binary trees the value $\Delta$ defined in Lemma~\ref{lemma:pairing-value} is
	equal to $g_1 - g_2$. Following the proof of Lemma~\ref{lemma:pairing-value} this observation gives
	\begin{equation}\label{binary-tree-pairing}
		f(T_r) \ge \frac{|V(T_r)|}{h(T_r)} - 1
	\end{equation}
	Additionally, the inequality \eqref{binary-tree-pairing} implies that every binary tree on $n$ vertices
	contains two disjoint homometric sets of size at least $\sqrt{n} - 1$.
	\\

	Furthermore, in Figure \ref{fig:tight-example-binary-trees} we define a family of binary trees
	$\{R_i\}_{i \in \mathbb{N}}$ that shows the lower bound \eqref{binary-tree-pairing} is tight.

		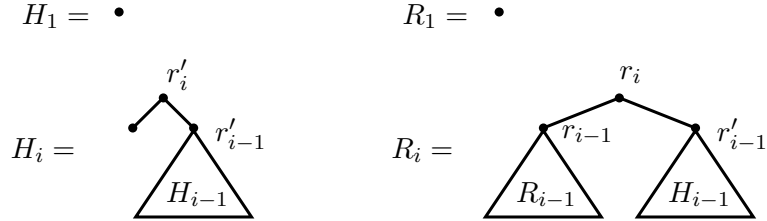
\begin{figure}[h]
		\begin{center}
\scalebox{1} 
{
\begin{pspicture}(0,-1.4192188)(10.682813,1.4592187)
\usefont{T1}{ptm}{m}{n}
\rput(0.77234375,1.2707813){$H_1 = $}
\psdots[dotsize=0.12](1.6009375,1.3207812)
\usefont{T1}{ptm}{m}{n}
\rput(5.592344,-0.50921875){$R_i = $}
\usefont{T1}{ptm}{m}{n}
\rput(8.332344,0.49078125){$r_i$}
\usefont{T1}{ptm}{m}{n}
\rput(5.762344,1.2707813){$R_1 = $}
\psdots[dotsize=0.12](6.6009374,1.3207812)
\usefont{T1}{ptm}{m}{n}
\rput(0.60234374,-0.50921875){$H_i = $}
\usefont{T1}{ptm}{m}{n}
\rput(2.3723438,0.49078125){$r'_i$}
\usefont{T1}{ptm}{m}{n}
\rput(2.6323438,-1.1092187){$H_{i - 1}$}
\usefont{T1}{ptm}{m}{n}
\rput(7.222344,-1.1092187){$R_{i - 1}$}
\usefont{T1}{ptm}{m}{n}
\rput(9.232344,-1.1092187){$H_{i - 1}$}
\usefont{T1}{ptm}{m}{n}
\rput(7.762344,-0.30921876){$r_{i - 1}$}
\usefont{T1}{ptm}{m}{n}
\rput(9.802343,-0.30921876){$r'_{i - 1}$}
\usefont{T1}{ptm}{m}{n}
\rput(3.2023437,-0.30921876){$r'_{i - 1}$}
\psdots[dotsize=0.12](1.7809376,-0.21921875)
\psdots[dotsize=0.12](2.1809375,0.18078125)
\psdots[dotsize=0.12](2.5809374,-0.21921875)
\psline[linewidth=0.04cm](2.1809375,0.18078125)(2.5809374,-0.21921875)
\psline[linewidth=0.04cm](2.1809375,0.18078125)(1.7809376,-0.21921875)
\pstriangle[linewidth=0.04,dimen=outer](2.5809374,-1.4192188)(1.6,1.2)
\psdots[dotsize=0.12](8.180938,0.18078125)
\psdots[dotsize=0.12](7.1809373,-0.21921875)
\psdots[dotsize=0.12](9.180938,-0.21921875)
\pstriangle[linewidth=0.04,dimen=outer](7.1809373,-1.4192188)(1.6,1.2)
\pstriangle[linewidth=0.04,dimen=outer](9.180938,-1.4192188)(1.6,1.2)
\psline[linewidth=0.04cm](8.180938,0.18078125)(7.1809373,-0.21921875)
\psline[linewidth=0.04cm](8.180938,0.18078125)(9.180938,-0.21921875)
\end{pspicture}
}
		\end{center}
		\caption{Definition of family of binary trees that shows the lower bound \eqref{binary-tree-pairing} is tight.}
		\label{fig:tight-example-binary-trees}
		\end{figure}
		
	\begin{lemma}\label{lemma:Hi-pairing}
		For a binary tree $H_i$, defined in Figure \ref{fig:tight-example-binary-trees},
		\begin{equation*}
			f(H_i) =
				\begin{cases}
				 0 & \rm{if\ } $i = 1$ \\
				 1 & \rm{otherwise}
				\end{cases}
		\end{equation*}
	\end{lemma}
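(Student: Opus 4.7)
The plan is a straightforward induction on $i$, exploiting the fact that $H_i$ has a very simple recursive structure: its root $r'_i$ has two children, one of which is a leaf and the other is the root of a copy of $H_{i-1}$. The base case $i=1$ is immediate from line \eqref{pairing-1st-case} of Definition \ref{def:pairing}, since $|V(H_1)|=1$ gives $f(H_1)=0$.

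For the inductive step I would split according to whether $i=2$ or $i\geq 3$, because these two cases differ in which subtree dominates the ordering $h(T_{v_1})\geq h(T_{v_2})$ required before \eqref{pairing-2nd-case}. When $i=2$, both children of $r'_2$ are isolated leaves, so applying \eqref{pairing-2nd-case} directly yields $f(H_2)=\max\{0+0,\,1\}=1$, matching the claim.

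For $i\geq 3$ the two children have subtrees of heights $i-1\geq 2$ and $1$, so the definition forces $v_1$ to be the root of $H_{i-1}$ and $v_2$ to be the leaf. Then $\sum_{i=1}^{k/2} h(T_{v_{2i}}) = h(T_{v_2}) = 1$, while by the inductive hypothesis $\sum_{i=1}^{k} f(T_{v_i}) = f(H_{i-1})+f(\text{leaf}) = 1+0 = 1$. Therefore \eqref{pairing-2nd-case} gives $f(H_i)=\max\{1,1\}=1$, completing the induction.

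There is no real obstacle here; the only point requiring care is reading off the recursive structure of $H_i$ from the figure and verifying that the ordering convention on the $v_j$'s is respected, so that the summand $h(T_{v_2})$ in \eqref{pairing-2nd-case} is indeed $1$ rather than $i-1$. Once that is fixed, both terms inside the maximum evaluate transparently.
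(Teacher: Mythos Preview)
Your argument is correct and follows exactly the approach the paper intends: the paper's own proof consists of the single line ``By induction on $i$,'' and you have simply filled in the routine details of that induction. There is nothing to add.
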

	\begin{proof}
		By induction on $i$.
	\end{proof}

	\begin{lemma}\label{lemma:Ri-pairing}
		For a binary tree $R_i$, defined in Figure \ref{fig:tight-example-binary-trees},
		\[
			f(R_i) = i - 1
		\]
	\end{lemma}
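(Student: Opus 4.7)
My plan is a straightforward induction on $i$, mirroring the recursive construction of $R_i$ from a copy of $R_{i-1}$ and a copy of $H_{i-1}$. Before beginning, I would record the auxiliary identity $h(R_i) = i$, which follows by its own immediate induction: $R_i$ has two subtrees, one being $R_{i-1}$ (of height $i-1$ by induction) and the other being $H_{i-1}$ (of height $i-1$ by inspection of the definition of $H_i$). Consequently the two children of the root of $R_i$ are roots of subtrees of equal height $i-1$, which is the key structural fact that drives the computation.

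The base case $i = 1$ is immediate: $R_1$ is a single vertex, so clause~\eqref{pairing-1st-case} of Definition~\ref{def:pairing} gives $f(R_1) = 0 = 1 - 1$.

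For the inductive step, I would invoke clause~\eqref{pairing-2nd-case} of Definition~\ref{def:pairing} with $k=2$. Because both subtrees at the root of $R_i$ have height $i-1$, the ordering assumption $h(T_{v_1}) \ge h(T_{v_2})$ is satisfied either way, and in particular $h(T_{v_2}) = i-1$. I would then evaluate the other argument of the max as $f(R_{i-1}) + f(H_{i-1})$, using the inductive hypothesis $f(R_{i-1}) = i - 2$ together with Lemma~\ref{lemma:Hi-pairing}, and observe that both arguments of the max come out to $i-1$.

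I do not anticipate any real obstacle, since the family $\{R_i\}$ is engineered precisely so that the two competing quantities inside the max in Definition~\ref{def:pairing} coincide. The only small point to monitor is the transition $i = 2$, where Lemma~\ref{lemma:Hi-pairing} gives $f(H_1) = 0$ rather than $1$, so that the inductive hypothesis alone yields only $f(R_1) + f(H_1) = 0$; there it is the pairing term $h(T_{v_2}) = 1$ that dominates and delivers $f(R_2) = 1$, keeping the closed-form identity intact.
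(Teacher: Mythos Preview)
Your proposal is correct and follows essentially the same induction as the paper: base case $i=1$, then for $i\ge 2$ apply clause~\eqref{pairing-2nd-case} with the two subtrees $R_{i-1}$ and $H_{i-1}$ of equal height $i-1$, combine the inductive hypothesis with Lemma~\ref{lemma:Hi-pairing}, and read off $\max\{(i-2)+f(H_{i-1}),\,i-1\}=i-1$. The paper separates out $i=2$ as a second base case rather than treating it inside the inductive step, but your handling of that transition is equally valid.
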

	\begin{proof}
		We give a proof by induction on $i$.
		\begin{description}
			\item[Basic step.] Let $i = 1$. Then
				\[
					f(R_1) \stackrel{\eqref{pairing-1st-case}}{=} 0 = i - 1
				\]
				For $i = 2$ in the similar way we obtain:
				\begin{eqnarray*}
					f(R_2) & \stackrel{\eqref{pairing-2nd-case}}{=} & \max{\{f(R_1) + f(H_1), h(R_1)\}} = 1
				\end{eqnarray*}
				
			\item[Inductive step.] Let $i > 2$. We assume $f(R_j) = j - 1$ for every $1 \le j < i$ and prove
				$f(R_i) = i - 1$. Observing $h(H_i) = h(R_i) = i$ and recalling Lemma~\ref{lemma:Hi-pairing}
				we conclude:
				\begin{eqnarray*}
					f(R_i) & \stackrel{\eqref{pairing-2nd-case}}{=} & \max{\{f(R_{i - 1}) + f(H_{i - 1}), h(R_{i - 1})\}} \\
								& = & \max{\{(i - 1 - 1) + 1, (i - 1)\}} \\
								& = & i - 1
				\end{eqnarray*}
		\end{description}
		
		This completes the proof.
	\end{proof}
	
	\begin{theorem}
		For every $n_0$ there exists a binary tree $T_r$ on $n \ge n_0$ vertices such that
		\[
			f(T_r) = \left\lceil \frac{n}{h(T_r)} - 1\right\rceil,
		\]
		i.e. \eqref{binary-tree-pairing} is a tight lower bound on size of disjoint homometric
		sets obtained over binary trees.
	\end{theorem}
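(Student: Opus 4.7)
The plan is to exhibit the family $\{R_i\}$ from Figure~\ref{fig:tight-example-binary-trees} as the witnesses, using the value of $f(R_i)$ already computed in Lemma~\ref{lemma:Ri-pairing} and then matching it against $\lceil |V(R_i)|/h(R_i) - 1\rceil$ by a direct arithmetic computation. Since $|V(R_i)|$ will grow without bound, picking $i$ large enough gives a binary tree on at least $n_0$ vertices for any prescribed $n_0$.

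First I would establish recurrences for $h$ and for the number of vertices. By induction on $i$, one has $h(H_i)=i$ and $|V(H_i)|=2i-1$, since $H_i$ is obtained by adjoining a root with two children to $H_{i-1}$ (one of which is a leaf and the other the root of $H_{i-1}$). For $R_i$, the root $r_i$ has children that are the roots of $R_{i-1}$ and $H_{i-1}$, so
\[
h(R_i)=\max\{h(R_{i-1}),h(H_{i-1})\}+1=i,
\]
and $|V(R_i)|=|V(R_{i-1})|+|V(H_{i-1})|+1=|V(R_{i-1})|+2(i-1)$, with $|V(R_1)|=1$. Unrolling gives $|V(R_i)|=i^2-i+1$.

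Next I would combine this with Lemma~\ref{lemma:Ri-pairing}, which already asserts $f(R_i)=i-1$. It then suffices to verify
\[
i-1=\left\lceil \frac{i^2-i+1}{i}-1\right\rceil,
\]
which is a one-line check: the expression inside the ceiling equals $i-2+\tfrac{1}{i}$, and for $i\ge 1$ this lies in $(i-2,i-1]$, hence its ceiling is $i-1$. Setting $n=|V(R_i)|$ and $T_r=R_i$ then yields $f(T_r)=\lceil n/h(T_r)-1\rceil$ exactly as required.

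Finally, to handle the quantifier over $n_0$, observe that $|V(R_i)|=i^2-i+1$ is unbounded in $i$, so given any $n_0$ one simply picks $i$ with $i^2-i+1\ge n_0$ and takes $T_r:=R_i$. There is no real obstacle here; the only mildly delicate point is the ceiling calculation, and in particular noting that the $+1/i$ correction is always strictly positive so the ceiling genuinely rounds $i-2+1/i$ up to $i-1$ rather than down, which is exactly what makes the bound \eqref{binary-tree-pairing} tight along this sequence.
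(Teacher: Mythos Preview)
Your proposal is correct and follows exactly the approach of the paper: exhibit the family $R_i$, compute $|V(R_i)|=i^2-i+1$ and $h(R_i)=i$ by the obvious recurrences, and combine this with Lemma~\ref{lemma:Ri-pairing}. The paper's own proof is a two-line sketch of precisely this; your write-up simply fills in the arithmetic (in particular the ceiling check and the handling of the quantifier over $n_0$) that the paper leaves implicit.
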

	\begin{proof}
		By induction on $i$ one can trivially prove that $|V(R_i)| = i (i - 1) + 1$ and $|V(H_i)| = 2 i - 1$.
		Following that observation and recalling Lemma \ref{lemma:Ri-pairing} we conclude the proof.
	\end{proof}

\section{Haircomb}\label{sec:haircomb}

	In this section we prove that every haircomb tree on $n$ vertices contains two disjoint homometric sets
	of size at least $cn^{2/3}$ where $c>0$ is a constant. \\

\begin{proofHaircomb}
    Let $H$ denote a haircomb on $n$ vertices with the spine length $s>0$.
    Let $l>0$ denote the length of its longest leg. Let us assume that $l<n/2$ as otherwise we are done.

        Let $L_1,\ldots, L_s$ denote the legs of $H$ ordered from the longest to the shortest one.

        Let $H_o$ and $H_e$ denote the subgraphs of $H$ such that $H_o$ (resp. $H_e$) consists of the spine and all the legs $L_{i}$
        with an odd (resp. even) index $i$.
        We get homemetric sets of the required size in $H$ by studying the overlaps of certain drawings of $H_o \uplus H_e$ ($\uplus$ stands for the disjoint union), in which their vertices are represented by the points of the integer lattice $\mathbb{Z}^2$.
        We consider the family  $D_r$, $0\le r \le s$ of drawings of $H_o \uplus H_e$.
        Let $v_1,..,v_s$ denote the vertices of the spine of $H$ so that $v_iv_{i+1}\in E(H)$.
        Let $P_{i,j}$ denote the $j$-th vertex of the leg of $H$ starting at $v_i$.

        In the drawing $D_0$  the leg that starts at $v_i \in H_e$ is drawn so that
        $P_{i,j}$ is mapped to the point $(i,j)$, if it belongs to $H_e$, and to the point $(i+s,j)$, otherwise.
        In particular, the vertex $v_i$ of the spine in $H_e$ is mapped to $(i,1)$, and in $H_o$ to $(v+s,1)$.

        We obtain the drawing $D_r$ of $H_o \uplus H_e$ by shifting the drawing of $H_e$ in  $D_0$
        by $r$ units to the right.

        We define $O_r$ to be the number of lattice points in $\mathbb{Z}^2$ representing both a vertex from a leg $L_i$ with
        an odd index $i$ and a vertex from a leg $L_i$ with an even index $i$ in $D_r$.
		Clearly, for a given drawing $D_r$ the overlapped vertices, which we count by $O_r$,
        give rise to homometric sets of size $O_r$.
		We proceed by counting the total number of overlaps in the drawings of the set
		$\mathcal{D}=\{D_0, D_1, \ldots, D_{2 s}\}$. Then by averaging we get a drawing $D_r$ with a big overlap.
		An overlapping is illustrated in Figure \ref{fig:haircomb-2-3-overlappings}.
			\begin{figure}[h]
			\begin{center}
\scalebox{0.8} 
{
\begin{pspicture}(0,-6.204531)(10.827188,6.184531)
\definecolor{color384}{rgb}{0.8,0.8,0.8}
\psline[linewidth=0.03cm](10.383125,2.1045313)(10.383125,1.1045313)
\psline[linewidth=0.03cm](9.383125,1.1045313)(9.383125,2.1045313)
\psline[linewidth=0.03cm](8.383125,1.1045313)(8.383125,6.1045313)
\psline[linewidth=0.03cm](7.383125,4.1045313)(7.383125,1.1045313)
\psline[linewidth=0.03cm](5.383125,1.1045313)(5.383125,4.1045313)
\psline[linewidth=0.03cm](4.383125,2.1045313)(4.383125,1.1045313)
\psline[linewidth=0.03cm](3.383125,1.1045313)(3.383125,3.1045313)
\psline[linewidth=0.03cm](2.383125,1.1045313)(10.383125,1.1045313)
\psline[linewidth=0.03cm](2.383125,5.1045313)(2.383125,1.1045313)
\psdots[dotsize=0.12,fillstyle=solid,dotstyle=o](2.383125,5.1045313)
\psdots[dotsize=0.12,fillstyle=solid,dotstyle=o](2.383125,4.1045313)
\psdots[dotsize=0.12,fillstyle=solid,dotstyle=o](2.383125,3.1045313)
\psdots[dotsize=0.12,fillstyle=solid,dotstyle=o](2.383125,2.1045313)
\psdots[dotsize=0.12,fillstyle=solid,dotstyle=o](2.383125,1.1045313)
\psdots[dotsize=0.12](3.383125,1.1045313)
\psdots[dotsize=0.12](3.383125,2.1045313)
\psdots[dotsize=0.12](3.383125,3.1045313)
\psdots[dotsize=0.12,fillstyle=solid,dotstyle=o](4.383125,1.1045313)
\psdots[dotsize=0.12,fillstyle=solid,dotstyle=o](4.383125,2.1045313)
\psdots[dotsize=0.12](5.383125,1.1045313)
\psdots[dotsize=0.12](5.383125,2.1045313)
\psdots[dotsize=0.12](5.383125,3.1045313)
\psdots[dotsize=0.12](5.383125,4.1045313)
\psdots[dotsize=0.12](6.383125,1.1045313)
\psdots[dotsize=0.12,fillstyle=solid,dotstyle=o](7.383125,1.1045313)
\psdots[dotsize=0.12,fillstyle=solid,dotstyle=o](7.383125,2.1045313)
\psdots[dotsize=0.12,fillstyle=solid,dotstyle=o](7.383125,3.1045313)
\psdots[dotsize=0.12,fillstyle=solid,dotstyle=o](7.383125,4.1045313)
\psdots[dotsize=0.12](8.383125,1.1045313)
\psdots[dotsize=0.12](8.383125,2.1045313)
\psdots[dotsize=0.12](8.383125,3.1045313)
\psdots[dotsize=0.12](8.383125,4.1045313)
\psdots[dotsize=0.12](8.383125,5.1045313)
\psdots[dotsize=0.12](8.383125,6.1045313)
\psdots[dotsize=0.12](9.383125,1.1045313)
\psdots[dotsize=0.12](9.383125,2.1045313)
\psdots[dotsize=0.12,fillstyle=solid,dotstyle=o](10.383125,1.1045313)
\psdots[dotsize=0.12,fillstyle=solid,dotstyle=o](10.383125,2.1045313)
\usefont{T1}{ptm}{m}{n}
\rput(2.3767188,0.7995312){\footnotesize $L_2$}
\usefont{T1}{ptm}{m}{n}
\rput(3.3767188,0.7995312){\footnotesize $L_5$}
\usefont{T1}{ptm}{m}{n}
\rput(4.3767185,0.7995312){\footnotesize $L_6$}
\usefont{T1}{ptm}{m}{n}
\rput(5.3767185,0.7995312){\footnotesize $L_3$}
\usefont{T1}{ptm}{m}{n}
\rput(6.3767185,0.7995312){\footnotesize $L_9$}
\usefont{T1}{ptm}{m}{n}
\rput(7.3767185,0.7995312){\footnotesize $L_4$}
\usefont{T1}{ptm}{m}{n}
\rput(8.3767185,0.7995312){\footnotesize $L_1$}
\usefont{T1}{ptm}{m}{n}
\rput(9.3767185,0.7995312){\footnotesize $L_7$}
\usefont{T1}{ptm}{m}{n}
\rput(10.3767185,0.7995312){\footnotesize $L_8$}
\psline[linewidth=0.03cm](9.383125,-5.695469)(9.383125,-4.695469)
\psline[linewidth=0.03cm](8.383125,-5.695469)(8.383125,-0.6954687)
\psline[linewidth=0.03cm](5.383125,-5.695469)(5.383125,-2.6954687)
\psline[linewidth=0.03cm](2.383125,-4.695469)(2.383125,-5.695469)
\psline[linewidth=0.03cm](3.383125,-5.695469)(3.383125,-3.6954687)
\psline[linewidth=0.03cm](0.383125,-5.695469)(9.383125,-5.695469)
\psline[linewidth=0.03cm](0.383125,-1.6954688)(0.383125,-5.695469)
\psdots[dotsize=0.12,fillstyle=solid,dotstyle=o](0.383125,-1.6954688)
\psdots[dotsize=0.12,fillstyle=solid,dotstyle=o](0.383125,-2.6954687)
\psdots[dotsize=0.12,fillstyle=solid,dotstyle=o](0.383125,-3.6954687)
\psdots[dotsize=0.12,fillstyle=solid,dotstyle=o](0.383125,-4.695469)
\psdots[dotsize=0.12,fillstyle=solid,dotstyle=o](0.383125,-5.695469)
\psdots[dotsize=0.12](3.383125,-5.695469)
\psdots[dotsize=0.12](3.383125,-4.695469)
\psdots[dotsize=0.12](3.383125,-3.6954687)
\psdots[dotsize=0.12,fillstyle=solid,dotstyle=o](2.383125,-5.695469)
\psdots[dotsize=0.12,fillstyle=solid,dotstyle=o](2.383125,-4.695469)
\psdots[dotsize=0.12,linecolor=color384](5.383125,-5.695469)
\psdots[dotsize=0.12,linecolor=color384](5.383125,-4.695469)
\psdots[dotsize=0.12,linecolor=color384](5.383125,-3.6954687)
\psdots[dotsize=0.12,linecolor=color384](5.383125,-2.6954687)
\psdots[dotsize=0.12](6.383125,-5.695469)
\psdots[dotsize=0.12,linecolor=color384](8.383125,-5.695469)
\psdots[dotsize=0.12,linecolor=color384](8.383125,-4.695469)
\psdots[dotsize=0.12](8.383125,-3.6954687)
\psdots[dotsize=0.12](8.383125,-2.6954687)
\psdots[dotsize=0.12](8.383125,-1.6954688)
\psdots[dotsize=0.12](8.383125,-0.6954687)
\psdots[dotsize=0.12](9.383125,-5.695469)
\psdots[dotsize=0.12](9.383125,-4.695469)
\usefont{T1}{ptm}{m}{n}
\rput(0.37671876,-6.0004687){\footnotesize $L_2$}
\usefont{T1}{ptm}{m}{n}
\rput(3.3767188,-6.0004687){\footnotesize $L_5$}
\usefont{T1}{ptm}{m}{n}
\rput(2.3767188,-6.0004687){\footnotesize $L_6$}
\usefont{T1}{ptm}{m}{n}
\rput(5.306719,-6.0004687){\footnotesize $L_3, L_4$}
\usefont{T1}{ptm}{m}{n}
\rput(6.3767185,-6.0004687){\footnotesize $L_9$}
\usefont{T1}{ptm}{m}{n}
\rput(8.306719,-6.0004687){\footnotesize $L_1, L_8$}
\usefont{T1}{ptm}{m}{n}
\rput(9.3767185,-6.0004687){\footnotesize $L_7$}
\psline[linewidth=0.03cm,arrowsize=0.05291667cm 2.0,arrowlength=1.4,arrowinset=0.4]{->}(5.983125,0.30453125)(5.983125,-0.29546875)
\usefont{T1}{ptm}{m}{n}
\rput(6.2545314,0.01453125){$D_7$}
\end{pspicture}
}

			\end{center}
			\caption{An example of overlapping. The sketch below represents the overlapping for $D_7$, for which $O_7 = 6$.
				The gray vertices are the overlapped ones.}
			\label{fig:haircomb-2-3-overlappings}
			\end{figure}
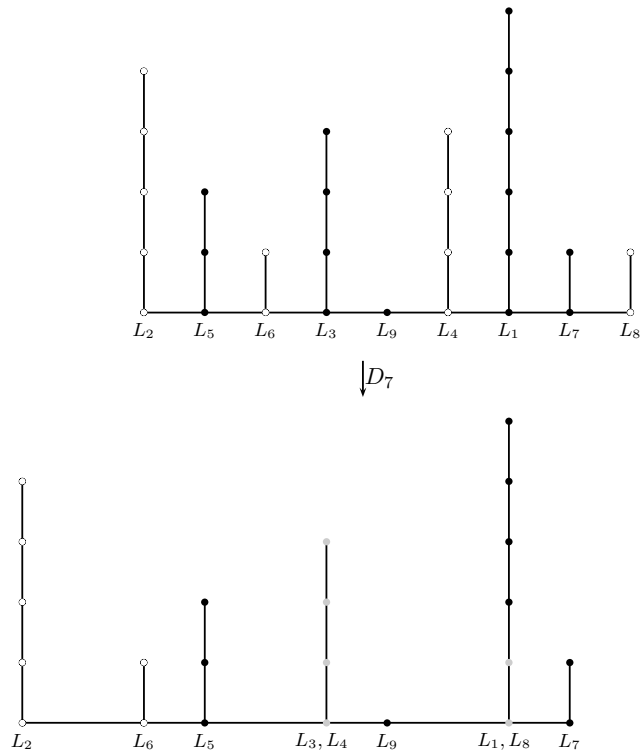

			Observe that a leg $L_i$ with an odd index overlaps with a leg in $L_i$ with an even index in exactly one
			drawing of $\mathcal{D}$.
            Moreover, the size of such overlap is equal to the size of the shorter leg among the two overlapping legs.
            Hence, the total number of overlaps $\mathbb{O}$ in the drawings of $\mathcal{D}$ can be lower bounded by
            \begin{equation}\label{eq:haircomb-o-bound}
                           \mathbb{O}\ge \sum_{i=1}^{\left\lfloor s/2\right\rfloor}i|L_{2i}|
            \end{equation}

            The last inequality can be understood as weighted sum, $i$'s being weights,
            of values $|L_{2 i}|$, for $i = 1, \ldots, \left\lfloor s/2\right\rfloor$.
            Note that $\sum_{i=1}^{\left\lfloor s/2\right\rfloor}(|L_{2i - 1}| - |L_{2i}|) \le l$, and therefore
            the right-hand side of inequality \eqref{eq:haircomb-o-bound} is at least
            $|L_{2}|\sum_{i=1}^{\frac{1}{|L_{2}|}\left\lfloor\frac{n-l}{2}\right\rfloor}i$.
            Thus, we can lower bound $\mathbb{O}$ as follows:
            \begin{eqnarray}
            \nonumber
            \mathbb{O}  & \ge & \sum_{i=1}^{\left\lfloor s/2\right\rfloor}i|L_{2i}|
            	\ge |L_{2}|\sum_{i=1}^{\frac{\lfloor n/4 \rfloor}{|L_{2}|}}i
            	\ge |L_{2}|{\frac 12}\left({\frac{n}{4|L_{2}|}}\right)^2= {\frac{n^2}{32|L_{2}|}} \\
            \nonumber
            & \ge & {\frac{n^2}{32\, l}}
            \end{eqnarray}

            The lower bound ${\frac{n^2}{32\, l}}$ on $\mathbb{O}$ immediately implies that there exists a drawing of
            $H_o \uplus H_e$ in the family of drawings $\mathcal{D}$
            giving an overlap of size ${\frac{n^2}{32\,l\,s}}$. Thus, we can always get homometric sets of size $\lfloor{\frac{n^2}{32\,l\,s}}\rfloor$.
            On the other hand, we can always get homometric sets of size at least $\lfloor s/2 \rfloor$ and $\lfloor l/2 \rfloor$.

            Finally, optimizing over $\lfloor{\frac{n^2}{32\,l\,s}}\rfloor$, $\lfloor s/2 \rfloor$ and
            $\lfloor l/2 \rfloor$ gives the desired bound.
\end{proofHaircomb}

\section{Concluding remarks}

In this note we improved the lower bound on the maximum size of homometric sets in case of trees and haircomb trees
by considering  homometric sets having a very special structure, which in both cases implied that the two homometric sets in a considered pair
induce isomorphic forests, or more precisely a disjoint union of paths.
Moreover, in case of trees we showed that by restricting ourselves to homometric sets of this structure the lower bound we obtained is the best possible
in general.
On the other hand, our lower bound could be still improved by considering more general homometric sets.

Owing to its simple structure, trees, and especially haircomb trees seem to be an appropriate class of graphs to look at, if we want to improve
the upper bound on the size of homometric sets in case of general graphs. However, we have no good guess how a tree witnessing a sublinear upper bound (if it exists)
should look like, and we are prone to believe that the right bound in case of trees is linear in $n$.

\section{Acknowledgement}\label{sec:acknowledgement}
We thank to J\' anos Pach for introducing us the problem and
to Andres J. Ruiz-Vargas  for many helpful discussions. 

\bibliographystyle{plain}
\bibliography{ref}
\end{document}